\newtheorem{lemma}{LEMMA}[section]
\newtheorem{proposition}[lemma]{PROPOSITION}
\newtheorem{corollary}[lemma]{COROLLARY}
\newtheorem{theorem}[lemma]{THEOREM}
\newtheorem{remarks}[lemma]{REMARKS}
\newcommand{\real} {\mathbbm{R}}
\newcommand{\nat}{\mathbbm{N}}
\newcommand{\rat}{\mathbbm{Q}}
\newcommand{\limn}{\lim_{n \to \infty}}
\newcommand{\vp}{\varphi}
\newcommand{\ve}{\varepsilon}
\newcommand{\reald}{{\real^d}}
\newcommand{\on}{\quad\text{ on }}
\newcommand{\und}{\quad\mbox{ and }\quad}
\newcommand{\ov}{\overline}
\newcommand{\V}{\mathcal V}  
\newcommand{\W}{\mathcal W}  
\newcommand{\C}{\mathcal C}  
\newcommand{\F}{\mathcal F}
\renewcommand{\H}{{\mathcal H}}
\newcommand{\B}{\mathcal B}
\newcommand{\J}{\mathcal J}
\renewcommand{\S}{\mathcal S}
\newcommand{\M}{\mathcal M}
\newcommand{\itemframe}%
{\setlength{\parskip}{10pt}\begin{enumerate} \setlength{\topsep}{10pt}%
\setlength{\itemsep}{15pt}\setlength{\parsep}{5pt}}
\newcommand{\vx}{\ve_x}
\date{}
\title{Reduced functions and Jensen measures} 
\author{Wolfhard Hansen and Ivan Netuka}
\begin{document}
\maketitle 

\begin{abstract}
Let $\varphi$ be a locally upper bounded Borel measurable function on a Greenian open set $\Omega$ in $\mathbbm R^d$
and, for every $x\in \Omega$, let $v_\varphi(x)$ denote the infimum of the integrals of $\varphi$ with respect to 
Jensen measures  for $x$ on $\Omega$. Twenty years ago, B.J.\,Cole and T.J.\,Ransford proved that
 $v_\varphi$ is the supremum of all subharmonic minorants of $\varphi$ on $X$ and that the sets
$\{v_\varphi<t\}$, $t\in \real$, are analytic. In this paper,  a different method leading to the inf-sup-result
establishes at the same time that, in fact,   
$v_\varphi$ is the minimum of $\varphi$ and a~subharmonic function, 
and hence Borel measurable. This is presented in the generality
of harmonic spaces, where semipolar sets are polar, and the key tools are   measurability results for 
reduced functions on balayage spaces which are of independent interest.

Keywords: Reduced function; Jensen measure; axiom of polarity.

MSC: 31B05, 31D05, 35J15, 60J45, 60J60, 60J75. 
\end{abstract}

\section{Introduction}
The motivation for our considerations is a question in connection with Jensen measures
which could not be answered in \cite{cole-ransford-subharmonicity}. Let   $\Omega$
be an open set in $\reald$, $d\ge 2$ (such that, if $d=2$, $\reald \setminus \Omega$
is not polar). We recall that a~(Radon) measure $\mu$ with compact support in $\Omega$ is a \emph{Jensen measure
for a point $x\in \Omega$} if 
\begin{equation}\label{jensen-def}
\int v\,d\mu\ge v(x) \quad\mbox{ for every subharmonic function $v$ on $\Omega$}.
\end{equation} 
Let $\vp$ be a locally upper  bounded Borel measurable function on $\Omega$ and 
\begin{equation*}
        v_\vp(x):=\inf \{\int \vp\,d\mu\colon \mbox{$\mu$ a Jensen measure for $x$}\}, \qquad x\in \Omega.
\end{equation*} 
The results \cite[Theorem 1.6 and Corollary 1.7]{cole-ransford-subharmonicity} show that 
\begin{equation}\label{main-CR}
       v_\vp=\sup\{v\colon \mbox{ $v$ subharmonic on $\Omega$, } v\le \vp\}
\end{equation} 
and that the sets $\{v_\vp<t\}$, $t\in \real$, are analytic (which led the authors B.J.\,Cole and T.J.\,Ransford to a~definition
 and the study of quasi-subharmonic functions; cf.\  also~\cite{alakhrass-hansen}). It remained an open question if 
the function $v_{\vp}$ is, in fact,  Borel measurable (see the lines following \cite[Theorem 1.6]{cole-ransford-subharmonicity}).

In this short paper, we shall give a positive answer 
to this question (even in a much more general setting)
using a different method which, at the same time, provides a~simpler proof for (\ref{main-CR}). 

Our essential tools are measurability properties which we shall prove for reduced functions 
on balayage spaces $(X,\W)$ satisfying the axiom of polarity (Section 2) and which are of independent interest. 

In our application to Jensen measures on  harmonic spaces  (Section 3)
 it is natural to consider superharmonic functions instead of subharmonic functions. Recalling that a function $u$ is superharmonic 
if and only if $-u$ is subharmonic, this requires us to look upside-down at the  definitions,
assumptions and statements above. 

In both sections, the reader, who is not familiar with or not interested
in general potential theory, may suppose that $X$ is an open subset $\Omega$ of $\reald$   and that 
$\W$ is the set of all   functions $u\ge 0$ on $\Omega$ which are hyperharmonic on $\Omega$ (that is, which,
for each connected component $U$ of $\Omega$, are either superharmonic on~$U$ or are identically~$+\infty$ on~$U$).

\section{Measurability of reduced functions}

Let $(X,\W)$ be a balayage space 
 ($X$ a locally compact space with countable base   and~$\W$ the set of all hyperharmonic functions $u\ge 0$ on $X$,   
see \cite{BH} or \cite{H-course}).   In the following, let $u_0$
be any strictly positive function in $\W\cap\C(X)$ (say  $u_0=1$ if $1\in \W$). 
We denote by~$\B(X)$, $\C(X)$ respectively  the set of all numerical Borel measurable functions,
real continuous functions on $X$.   
As usual, given a set~$\F$ of functions, let~$\F^+$ 
be the set of all $f\in \F$ such that  $f\ge 0$.

We recall that, for every numerical  function $\vp\ge 0$ on $X$,   a \emph{reduced function}~$R_\vp$ is defined by
 \begin{equation}\label{def-red}
      R_\vp:=\inf\{u\in \W\colon u\ge \vp\}. 
\end{equation} 
It is easily seen that the mapping $\vp\mapsto R_\vp$ is subadditive, positively homogeneous, and $R_{R_\vp}=R_\vp$.
In particular, we have $R_v^A:=R_{v1_A}$ for $A\subset X$ and $v\in \W$, 
which leads to   reduced measures~$\vx^A$, $x\in X$, characterized by $\int v\,d\vx^A=R_v^A(x)$,
$v\in \W$ (by~\cite[VI.1.1]{BH}, the mappings $v\mapsto R_v^A$ are additive).

Let $\mathcal P(X)$ denote the set of all continuous real potentials on $X$,
that is,   of all $p\in\W\cap \C(X)$ satisfying
\begin{equation*}
   \inf \{R_p^{X\setminus K}\colon K\mbox{ compact in }X\}=0.
\end{equation*} 
A real function $\vp$ on $X$ is called $\mathcal P$-bounded, if $|\vp|\le p$ for some $p\in \mathcal P(X)$
(every bounded $\vp$ with compact support is $\mathcal P$-bounded).

For every numerical function $v$ on $X$, let $\hat v$ denote its lower semicontinuous regularization,
that is, 
\begin{equation*} 
              \hat v(x):=\liminf\nolimits_{y\to x} v(y), \qquad x\in X.
\end{equation*} 
If $\V\subset \W$ and $v:=\inf \V$,
then $\hat v\in \W$. So $\hat R_\vp:=\widehat{R_\vp}\in \W$ for every \hbox{$\vp\colon X\to [0,\infty]$}.
If~$\vp$ is lower semicontinuous, then $\hat R_\vp\ge \hat \vp=\vp$, and hence $\hat R_\vp\ge R_\vp$,
 $R_\vp=\hat R_\vp\in\W$. 
Moreover, $R_\vp$ is continuous, upper semicontinuous respectively,  
if $\vp$ is a $\mathcal P$-bounded function which is continuous,  upper semicontinuous (see  \cite[Corollary~1.2.2]{H-course}).

A subset $P$ of $X$ is \emph{polar}, if it has no interior points and, for every $x\in X$,
there exists a function $u\in\W$ such that $u=\infty$ on $P\setminus \{x\}$ and $u(x)<\infty$.
In particular, we know that $R_\vp=\vp$ and $\hat R_\vp=0$,  if $\vp$ vanishes outside a polar set.

\emph{Throughout this paper,  let us suppose that $(X,\W)$ satisfies the axiom of
  polarity {\rm(}Hunt's hypothesis {\rm(H)}{\rm)}: 
Every \emph{semipolar set}, that is, every set $\{\hat v<v\}$, where $v=\inf \V$
for some $\V\subset \W$, is polar.}

Let $\tilde \B(X)$ denote the set of all numerical functions $\vp$ on $X$ 
for which there exist functions $\vp_1,\vp_2\in\B(X)$ 
with $\vp_1\le \vp\le \vp_2$ on $X$   and $\vp_1=\vp_2$ outside a polar set.

\begin{proposition}\label{hat-R-R}
Let  $\vp\colon X\to [0,\infty] $ and  $P$ be a polar set such that   
$\hat R_\vp=R_\vp$ on $X\setminus P$.  Then 
\begin{equation}\label{mertens-id} 
    R_\vp=(\vp 1_P)\vee   \hat R_{\vp 1_{X\setminus P}} =\vp\vee \hat R_\vp  \und 
\hat R_{\vp1_{X\setminus P}}= R_{\vp1_{X\setminus P}}=\hat R_\vp.
\end{equation} 
In particular, $R_\vp\in \tilde \B(X)$.  Moreover, $R_\vp\in   \B(X)$ if $\vp\in\B(X)$. 
\end{proposition}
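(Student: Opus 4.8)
The plan is to prove first the pointwise identity $R_\vp=\vp\vee\hat R_\vp$, then the coincidence $\hat R_{\vp 1_{X\sms P}}=R_{\vp 1_{X\sms P}}=\hat R_\vp$, and finally to read off measurability. Throughout write $\psi:=\vp 1_{X\sms P}$ and $\chi:=\vp 1_P$, so $\vp=\psi+\chi$; since $\chi$ vanishes outside the polar set $P$, the quoted fact gives $R_\chi=\chi$ and $\hat R_\chi=0$. Note also $\hat R_\vp\ge\psi$: on $X\sms P$ the hypothesis gives $\hat R_\vp=R_\vp\ge\vp=\psi$, while $\psi=0$ on $P$; as $\hat R_\vp\in\W$ this already yields $R_\psi\le\hat R_\vp$. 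For the identity itself, $\ge$ is clear from $R_\vp\ge\vp$ and $R_\vp\ge\hat R_\vp$. For $\le$, on $X\sms P$ the hypothesis gives $R_\vp=\hat R_\vp\le\vp\vee\hat R_\vp$, so only $x\in P$ remain. Fix such an $x$, put $c:=\vp(x)\vee\hat R_\vp(x)$, and assume $c<\infty$ (otherwise $R_\vp(x)\ge c=\infty$ is trivial). Polarity of $P$ provides $u_x\in\W$ with $u_x=\infty$ on $P\sms\{x\}$ and $u_x(x)<\infty$; for $\ve>0$ set
\[
   u:=\hat R_\vp+\frac{c-\hat R_\vp(x)}{u_0(x)}\,u_0+\ve\,u_x\in\W ,
\]
where the (nonnegative) coefficient equals $(\vp(x)-\hat R_\vp(x))^+$. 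Then $u\ge\hat R_\vp\ge\vp$ on $X\sms P$, $u=\infty\ge\vp$ on $P\sms\{x\}$, and $u(x)=c+\ve u_x(x)\ge\vp(x)$; hence $u\ge\vp$ and $R_\vp(x)\le u(x)=c+\ve u_x(x)$. Letting $\ve\downarrow0$ gives $R_\vp(x)\le c$, so $R_\vp=\vp\vee\hat R_\vp$. Since $\vp\le\hat R_\vp$ off $P$, this also equals $(\vp 1_P)\vee\hat R_\vp$.

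Next I treat the coincidence. Subadditivity of $\vp\mapsto R_\vp$ gives $R_\vp\le R_\psi+R_\chi=R_\psi+\chi$, whence $R_\vp\le R_\psi$ on $X\sms P$; with $R_\psi\le R_\vp$ and the hypothesis this yields $R_\psi=R_\vp=\hat R_\vp$ on $X\sms P$. By the axiom of polarity the set $\{R_\psi>\hat R_\psi\}$ is semipolar, hence polar, so $\hat R_\psi=R_\psi=\hat R_\vp$ holds outside the polar set $P\cup\{R_\psi>\hat R_\psi\}$. Since $\hat R_\psi$ and $\hat R_\vp$ both belong to $\W$ and agree outside a polar set, they agree everywhere; hence $\hat R_\psi=\hat R_\vp$, and then $\hat R_\psi\le R_\psi\le\hat R_\vp=\hat R_\psi$ forces $R_\psi=\hat R_\vp$. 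This establishes (\ref{mertens-id}), the first displayed equality being now $(\vp 1_P)\vee\hat R_\psi=(\vp 1_P)\vee\hat R_\vp=\vp\vee\hat R_\vp$.

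The measurability statements follow quickly. As $\hat R_\vp\in\W$ is lower semicontinuous, $\hat R_\vp\in\B(X)$. Enlarging $P$ to a Borel polar set $P'\supseteq P$ (every polar set is contained in a Borel polar set) preserves $R_\vp=\hat R_\vp$ on $X\sms P'$, and then $\hat R_\vp\le R_\vp\le \hat R_\vp 1_{X\sms P'}+\infty\cdot 1_{P'}$, where both bounds are Borel and coincide off the polar set $P'$; hence $R_\vp\in\tilde\B(X)$. If moreover $\vp\in\B(X)$, then $R_\vp=\vp\vee\hat R_\vp$ is the maximum of two Borel functions, so $R_\vp\in\B(X)$.

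The step I expect to be most delicate is the passage from coincidence outside a polar set to coincidence everywhere for the two functions $\hat R_\psi,\hat R_\vp\in\W$. This is the only point where I invoke a standard balayage-space fact not made explicit in the excerpt: namely that polar sets are negligible for hyperharmonic functions (equivalently, that a function in $\W$ is determined by its restriction to the complement of a polar set), which rests on the fine continuity of functions in $\W$ together with the thinness of a polar set at each of its points. The remaining ingredients — subadditivity of $R$, the behaviour of $R_\chi,\hat R_\chi$ for $\chi$ supported in $P$, the axiom of polarity, and the existence of the separating functions $u_x$ and of a Borel polar hull — are all directly available.
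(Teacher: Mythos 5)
Your proof is correct, and it reaches the identities of (\ref{mertens-id}) by a partly different route. The skeleton matches the paper's: split $\vp$ into $f:=\vp 1_P$ and $g:=\vp 1_{X\sms P}$, use that reduced functions of functions carried by a polar set are trivial, and invoke the identity principle that two functions of $\W$ agreeing off a (semi)polar set agree everywhere (the ``standard fact'' you flag at the end is exactly the paper's citation of \cite[VI.5.10]{BH}, so it is fair game). The difference lies in how $R_\vp\le\vp\vee \hat R_\vp$ is obtained. The paper proves the single chain
\[
   f\vee R_g\;\le\; R_\vp\;\le\; R_{f_0}+R_{R_g}\;=\;f_0+R_g\;=\;f\vee R_g,
   \qquad f_0:=1_{\{R_g<\infty\}}(f-R_g)^+,
\]
which yields simultaneously $R_\vp=f\vee R_g$ (whence the first identity, once $R_g=\hat R_g$ is known) and $R_\vp=f_0+R_g$ (whence $\hat R_\vp=\hat R_g$, since $f_0$ vanishes off $P$). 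You instead prove $R_\vp(x)\le\vp(x)\vee\hat R_\vp(x)$ at each $x\in P$ by exhibiting the explicit majorant $\hat R_\vp+\lambda u_0+\ve u_x$ built from the function $u_x$ witnessing polarity --- the same device the paper uses later in Lemma \ref{nearly-hyper} --- and then recover $\hat R_{\vp1_{X\sms P}}=\hat R_\vp$ separately from the subadditive bound $R_\vp\le R_{\vp1_{X\sms P}}+\vp1_P$. Your route costs one extra argument but makes more transparent exactly where polarity of $P$ enters; the paper's chain is more compact. One point where you are more careful than the paper: deducing $R_\vp\in\tilde\B(X)$ for non-Borel $\vp$ does require enclosing $P$ in a Borel polar set, a step the paper leaves implicit.
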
 

\begin{proof}  Let $f:=\vp 1_P$ and  $g:=\vp 1_{X\setminus P}$. Trivially, 
\begin{equation}\label{trivial-Rvp} 
f\vee \hat R_g\le \vp\vee \hat R_\vp\le R_\vp.
\end{equation} 
Moreover, $\hat R_\vp\in\W$ and  $\hat R_\vp=R_\vp\ge g$ on $X\setminus P$. Therefore 
\begin{equation}\label{hatg}
\hat R_\vp\ge R_g\ge \hat R_g.
\end{equation} 
Defining   $f_0:=1_{\{R_g<\infty\}}(f-R_g)^+$ we have  $\vp=f\vee g\le f\vee R_g=f_0+R_g$. Further,
$R_{f_0}=f_0$, since $f_0=0$ outside the polar set $P$, and we obtain that 
\begin{equation*}
                    f\vee R_g\le R_{f\vee g}=R_\vp\le R_{f_0+R_g}\le R_{f_0}+R_{R_g}=f_0+R_g=f\vee R_g.
\end{equation*} 
So  $R_\vp=f\vee R_g=f_0+R_g$. In particular, 
$\hat R_\vp=\hat R_g$ on the complement of  the (semi)polar set $P\cup\{\hat R_g<R_g\}$, and hence $\hat R_\vp=\hat R_g$
(see \cite[VI.5.10]{BH}).
  Having~(\ref{hatg}) the second part of (\ref{mertens-id}) follows. Its first part is now an immediate
consequence of~(\ref{trivial-Rvp}) and the equality $R_\vp=f\vee R_g$.
\end{proof} 

Combining Proposition  \ref{hat-R-R} with the fact (see \cite[VI.1.9]{BH}) that,
 for every Borel set $A$, the function $\hat R_{u_0}^A$is  the supremum of all $\hat R_{u_0}^K$, 
$K$ compact in $A$, we then obtain the following result.

\begin{theorem}\label{usemi}
Let $\vp\in \tilde \B^+(X)$  and let $\Psi$ denote the set of all  
 bounded upper semicontinuous functions $\psi\ge 0$ with compact support
in $\{\vp>0\}$. Then there exists an increasing sequence $(\psi_n)$ in $\Psi$  such that 
\begin{equation}\label{form-usemi}
\hat R_\vp=   \sup\nolimits_{n\in \nat} \hat R_{\psi_n}.
\end{equation} 
In particular,
\begin{equation}\label{final}
            R_\vp= \vp\vee  \sup\nolimits_{n\in \nat} \hat R_{\psi_n}
=\vp\vee  \sup\nolimits_{n\in \nat}   R_{\psi_n}=\sup\{R_\psi\colon \psi\in \Psi\}.
\end{equation} 
\end{theorem}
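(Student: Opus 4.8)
The plan is to construct an increasing sequence $(\psi_n)$ in $\Psi$ with $\psi_n\le\vp$ and $\hat R_{\psi_n}\uparrow\hat R_\vp$; granting this, the three identities in $(\ref{final})$ drop out of Proposition \ref{hat-R-R}. Indeed, applying that proposition to $\vp$ (with $P=\{\hat R_\vp<R_\vp\}$, polar by the axiom of polarity) gives $R_\vp=\vp\vee\hat R_\vp=\vp\vee\sup_n\hat R_{\psi_n}$, the first equality. For the second, applying it to each $\psi_n$ yields $R_{\psi_n}=\psi_n\vee\hat R_{\psi_n}\le\vp\vee\hat R_{\psi_n}$, so $\sup_n\hat R_{\psi_n}\le\sup_n R_{\psi_n}\le\vp\vee\sup_n\hat R_{\psi_n}=R_\vp$, and taking $\vp\vee(\cdot)$ throughout forces $\vp\vee\sup_n R_{\psi_n}=R_\vp$. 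For the last, every $\psi\in\Psi$ satisfies $R_\psi\le R_\vp$ (monotonicity, as $\psi\le\vp$), while for $x\in\{\vp>0\}$ the function $\psi=\vp(x)1_{\{x\}}$ lies in $\Psi$ and $R_\psi(x)\ge\psi(x)=\vp(x)$, whence $\vp\le\sup\{R_\psi\colon\psi\in\Psi\}$; together with $\sup_nR_{\psi_n}\le\sup\{R_\psi\colon\psi\in\Psi\}$ this gives $R_\vp=\vp\vee\sup_nR_{\psi_n}\le\sup\{R_\psi\colon\psi\in\Psi\}\le R_\vp$.

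First I would reduce to $\vp\in\B^+(X)$. Choosing Borel $\vp_1\le\vp\le\vp_2$ with $\vp_1=\vp_2$ off a polar set $N$, subadditivity of $R$ gives $R_{\vp_2}\le R_{\vp_1}+\vp_2 1_N$ and symmetrically, so $R_{\vp_1}=R_{\vp_2}$ off $N$; since both regularizations lie in $\W$ and agree off a polar set, \cite[VI.5.10]{BH} yields $\hat R_{\vp_1}=\hat R_{\vp_2}$, whence $\hat R_\vp=\hat R_{\vp_1}$ by squeezing. As $\{\vp_1>0\}\subset\{\vp>0\}$, a sequence working for $\vp_1$ works for $\vp$. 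This records, in particular, the principle (used repeatedly below) that altering $\vp$ on a polar set leaves $\hat R_\vp$ unchanged.

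The crux — and the step I expect to be the \textbf{main obstacle} — is the lower estimate $\hat R_\vp\le w$, where $w:=\sup\{\hat R_\psi\colon\psi\in\Psi,\ \psi\le\vp\}$; being a supremum of a family in $\W$ which is upward directed ($\Psi$ is stable under $\vee$, and $\psi\le\psi'$ gives $\hat R_\psi\le\hat R_{\psi'}$), $w$ itself lies in $\W$. The key idea is to avoid any inner-regularity statement for the function $\vp$ and to apply \cite[VI.1.9]{BH} only to single level sets. For $c>0$ put $A_c:=\{\vp\ge cu_0\}$ (Borel, since $\vp$ is now Borel). For every compact $K\subset A_c$ the function $cu_0 1_K$ is bounded, upper semicontinuous, $\le cu_0\le\vp$ on $K$, and has compact support $K\subset A_c\subset\{\vp>0\}$, so $cu_0 1_K\in\Psi$; hence, by positive homogeneity of $R$ and \cite[VI.1.9]{BH},
$$w\ge\sup\nolimits_{K\subset A_c}\hat R_{cu_0 1_K}=c\,\sup\nolimits_{K\subset A_c}\hat R^K_{u_0}=c\,\hat R^{A_c}_{u_0}.$$
Since $\hat R^{A_c}_{u_0}=R^{A_c}_{u_0}\ge u_0$ on $A_c$ off a polar set $P_c$, we obtain $w\ge cu_0$ on $A_c\setminus P_c$. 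Letting $c$ run through the positive rationals and setting $P:=\bigcup_cP_c$ (polar), every $x\notin P$ with $\vp(x)>0$ satisfies $w(x)\ge cu_0(x)$ for all rational $0<c\le\vp(x)/u_0(x)$, so $w(x)\ge\vp(x)$; thus $w\ge\vp$ off the polar set $P$. As $w\in\W$, it follows that $w\ge R_{\vp 1_{X\setminus P}}\ge\hat R_{\vp 1_{X\setminus P}}=\hat R_\vp$, the last equality by the principle above. Combined with the trivial bound $w\le\hat R_\vp$, this gives $w=\hat R_\vp$.

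It remains to replace the directed supremum by an increasing sequence. The family $\{\hat R_\psi\colon\psi\in\Psi,\ \psi\le\vp\}$ consists of lower semicontinuous functions on the second countable space $X$, so by the Lindelöf property its supremum $w=\hat R_\vp$ is already the supremum of a countable subfamily; using that the family is upward directed, I would pass to successive maxima to obtain $\psi_n\in\Psi$ with $\psi_n\le\vp$, $\psi_n\uparrow$, and $\hat R_{\psi_n}\uparrow\hat R_\vp$. This is the sequence required, and by the first paragraph it also yields $(\ref{final})$.
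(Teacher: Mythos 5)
Your proof is correct and follows essentially the same route as the paper's: both arguments pass to the level sets $\{\vp> tu_0\}$ (resp.\ $\{\vp\ge cu_0\}$) for rational parameters, use the inner regularity of $\hat R_{u_0}^{A}$ from \cite[VI.1.9]{BH} to produce the competitors $tu_01_K\in\Psi$, collect the polar exceptional sets $\{\hat R_{u_0}^{A_t}<R_{u_0}^{A_t}\}$ over a countable parameter set, conclude $\sup_\psi\hat R_\psi\ge R_{\vp 1_{X\setminus P}}$ and hence $=\hat R_\vp$ via Proposition~\ref{hat-R-R}, and finally extract an increasing sequence from the upward directed family (your Lindel\"of argument is exactly the content of the cited \cite[I.1.7]{BH}). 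One small point in your favour: you explicitly restrict to $\psi\le\vp$, which is indeed needed for the ``trivial'' upper bound $\sup_\psi\hat R_\psi\le\hat R_\vp$ and for $\sup\{R_\psi\colon\psi\in\Psi\}\le R_\vp$, and which the paper uses implicitly (both in its proof and in Corollary~\ref{vp-positive}) without recording it in the definition of $\Psi$.
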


\begin{proof}  Let $\vp_1,\vp_2\in\B(X)$ such that $\vp_1\le \vp\le \vp_2$ and   $P_0:=\{\vp_1\ne \vp_2\}$ is polar.  
For every $t\in\rat^+$, let $A_t$ be the Borel subset $\{\vp_1>tu_0\}$ of $\{\vp>tu_0\}$. The union~$P$ of~$P_0$,
the set $\{\hat R_\vp<R_\vp\}$, and the sets   $\{\hat R_{u_0}^{A_t}<R_{u_0}^{A_t}\}$, $t\in\rat^+$, is polar. 

Let $x\in X\setminus P$ and  $a<\vp(x)$. Let us choose  $t\in\rat^+$ such that   $a<tu_0(x)<\vp(x)$.
Then $x\in A_t$  and   $ \hat R_{u_0}^{A_t}(x) = R_{u_0}^{A_t}(x) =  u_0(x)>a/t$.  
Hence $ \hat R_{u_0}^K(x)>a/t$ for some  compact $K$ in~$A_t$. Obviously, $\psi:=tu_01_K\in\Psi$ and  
$\hat R_\psi(x)=t\hat R_{u_0}^K(x)>a$. 

This shows  that  
\begin{equation*}
               u:=     \sup\{\hat R_\psi\colon \psi\in \Psi\}\ge \vp \on X\setminus P.
\end{equation*} 

If $\psi_1,\psi_2\in \Psi$, then $\psi:=\psi_1\vee \psi_2\in \Psi$ and
 $\hat R_{\psi_1}\vee \hat R_{\psi_2}\le \hat R_\psi$.
So,  by \cite[I.1.7]{BH},  there exists  an increasing sequence $(\psi_n)$ in $\Psi$ 
with $u=\sup_{n\in\nat}  \hat R_{\psi_n}$. 
 In particular, $u\in\W$, and hence $u\ge R_{\vp1_{X\setminus P}} $.   
 Since trivially $u\le \hat R_\vp$,
the proof is completed by  Proposition \ref {hat-R-R}, monotonicity, 
and the  
 fact that $\vp1_{\{x\}} \in\Psi$ for \hbox{$x\in\{\vp>0\}$}. 
\end{proof}

\section{Application to Jensen measures}

From now on, we suppose more restrictively that the balayage space
$(X,\W)$ satisfying the axiom of polarity is a harmonic space, that is, 
  $\W$ has the following local truncation property: For all open sets~$U$
in~$X$ and all $u,v\in \W$ such that $u\ge v$ on the boundary~$\partial U$ of~$U$, the
function~$w$ defined by $w:=u\wedge v$ on~$U$ and $v$ on~$X\setminus U$
is contained in~$\W$ (see \cite[Section III.8]{BH}). This means that 
the reduced measures~$\vx^{X\setminus V}$ (that is, the harmonic     
measures~$\mu_x^V$) 
for open sets~$V$ and~$x\in V$ are supported by~$\partial V$ (instead of having
supports which could be the entire complement of $V$).

  In probabilistic terms,
an associated process will be a diffusion (instead of a~process possibly having many jumps).
We recall that fairly general linear differential operators $L$ of second order on open subsets~$X$ of~$\reald$
($L$ being the Laplacian in the classical case) lead to  harmonic spaces (see, for example, \cite[Section 7]{GH1}).

Given an open set $U$ in~$X$, let ${}^\ast\H(U)$ denote the set of all \emph{hyperharmonic functions}~$v$
on~$U$, that is, of all lower semicontinuous $v\colon U\to \left]-\infty,\infty\right]$ such that
$\int v \,d\mu_x^V\le v(x)$ for every open set $V$, which is relatively compact in $U$,  and every~$x\in V$. 
If, in addition, the functions $x\mapsto \int v\,d\mu_x^V$ are continuous and finite on~$V$, then such a function~$v$
is called \emph{superharmonic} on~$U$. The set of all superharmonic functions on~$U$ is denoted by~$\S(U)$,
and $\H(U)=\S(U)\cap (-\S(U))$ is the set of all harmonic functions on $U$.

We note that ${}^\ast\H^+(X)=\W$ and $\S^+(X)\cap \C(X)=\W\cap \C(X)$.
In particular, it is compatible with (\ref{def-red}) to define, for \emph{every} numerical
function~$\vp$ on~$X$, 
\begin{equation*}
      R_\vp:=\inf\{ v \in {}^\ast\H(X)\colon v\ge \vp\}.
\end{equation*} 

In our proofs we shall tacitly use that, for every (relatively compact) open set~$U$ in~$X$,
$(U,{}^\ast \H^+(U))$ is a harmonic space as well (see \cite [V.1.1]{BH} in connection with     
\cite[III.2.8 and 6.11]{BH})   
and that sets $A\subset U$ which are polar (semipolar, respectively) with respect to~$(U,{}^\ast \H^+(U))$
are polar (semipolar, respectively) with respect to $(X,\W)$ (see \cite[Sections~6.2 and~6.3]{Const}; the converse is trivial).

Given an open set $U$ in $X$, we say that  a locally lower bounded function~$v$ on~$U$
 is \emph{nearly hyperharmonic} if
$\int^\ast v\,d\mu_x^V\le v(x)$ for every open set $V$, which is relatively compact in $U$,
and every $x\in V$. As is well-known, $\hat v\in {}^\ast\H(U)$ for every nearly hyperharmonic function on $U$.

\begin{lemma}\label{nearly-hyper} Let  $v$   be a locally lower bounded numerical function on an open set~$U$ in~$X$.
The  following statements are equivalent:
\begin{itemize}
\item[\rm (i)]
$v$ is nearly hyperharmonic on $U$ and the set $\{\hat v<v\}$ is polar.
\item[\rm (ii)] $v$ is the infimum of its hyperharmonic majorants on $U$. 
\end{itemize} 
\end{lemma}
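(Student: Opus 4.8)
The plan is to prove the two implications separately, with the forward direction (i)$\Rightarrow$(ii) being the substantive part. Throughout, the natural object to introduce is $w:=\hat v$, the lower semicontinuous regularization of $v$. By the remark just before the lemma, if $v$ is nearly hyperharmonic on $U$, then $w=\hat v\in{}^\ast\H(U)$, so $w$ is automatically a hyperharmonic function. The whole content of (ii) is then to upgrade the single majorant $w$ into a statement that $v$ is \emph{exactly} the pointwise infimum of \emph{all} its hyperharmonic majorants. Since any hyperharmonic majorant $s\ge v$ also satisfies $s\ge\hat v=w$ (majorants of $v$ dominate the lsc regularization because $s$ is itself lower semicontinuous), the infimum of all hyperharmonic majorants is bounded below by $w$ and above by $v$ itself (as $w\le v$ and $w$ is one such majorant only where $w\ge v$, which is the delicate point).

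For (i)$\Rightarrow$(ii), here is the key maneuver. Set $w=\hat v\in{}^\ast\H(U)$ and let $P:=\{\hat v<v\}$, which is polar by hypothesis. On $U\setminus P$ we have $w=v$, so $w$ is a hyperharmonic majorant agreeing with $v$ off the polar set $P$. It remains to handle the points of $P$, where $w<v$ and we must produce, for each $x_0\in P$ and each level $a<v(x_0)$, a hyperharmonic majorant $s$ of $v$ with $s(x_0)>a$ (in fact one wants $\ge v(x_0)$ in the limit). This is exactly where polarity is exploited: by the definition of a polar set, for each $x_0\in P$ there is $u\in\W$ (equivalently in ${}^\ast\H^+(U)$, using that $U$-polar and $X$-polar coincide) with $u=+\infty$ on $P\setminus\{x_0\}$ and $u(x_0)<\infty$. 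Adding a suitable multiple $\varepsilon u$ to $w$ lifts the value on $P\setminus\{x_0\}$ to $+\infty$ while keeping $w+\varepsilon u$ finite and close to $w(x_0)=v(x_0)$ at $x_0$ itself. One then checks that $s:=w+\varepsilon u$, or rather a truncated/combined version, dominates $v$ everywhere: on $U\setminus P$ because $s\ge w=v$ there, on $P\setminus\{x_0\}$ because $s=+\infty$, and near $x_0$ one must separately ensure $s\ge v$ pointwise at the isolated remaining point $x_0$, which is arranged by letting $a\uparrow v(x_0)$ and taking infima over the family. Taking the infimum over all such $s$ (over all choices of $x_0$, $a$, and $\varepsilon$) recovers $v$ pointwise, giving (ii).

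For the converse (ii)$\Rightarrow$(i), suppose $v$ is the infimum of a family $\V\subset{}^\ast\H(U)$ of hyperharmonic majorants. Then $v=\inf\V$ is nearly hyperharmonic: for any relatively compact open $V\Subset U$ and $x\in V$, each $s\in\V$ gives $\int^\ast v\,d\mu_x^V\le\int s\,d\mu_x^V\le s(x)$, and taking the infimum over $s\in\V$ yields $\int^\ast v\,d\mu_x^V\le v(x)$ (one must be a little careful with the outer integral and the exchange of $\inf$ and $\int^\ast$, but the inequality goes the safe way). Moreover $\hat v=\widehat{\inf\V}\in{}^\ast\H(U)$, and the set $\{\hat v<v\}$ is precisely a set of the form $\{\hat{(\inf\V)}<\inf\V\}$, hence \emph{semipolar} by the definition of semipolar sets; the axiom of polarity (Hunt's hypothesis (H)), in force throughout the paper, then forces this set to be polar. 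This establishes both clauses of (i).

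The main obstacle is the isolated-point bookkeeping in (i)$\Rightarrow$(ii): polarity of $P$ gives, for each fixed $x_0$, a function blowing up on $P\setminus\{x_0\}$, but one must assemble these into genuine majorants of $v$ that are finite and arbitrarily close to $v(x_0)$ at $x_0$, and then verify that the infimum over the whole family hits $v(x_0)$ exactly and never drops below $v$ anywhere. Making the perturbation $\varepsilon u$ legitimately hyperharmonic (it is, as a sum of elements of ${}^\ast\H^+(U)$ with $w$) and checking the majorization at $x_0$ itself — rather than merely off a polar set — is where the care is needed; everything else is routine monotonicity and the standard fact $\hat v\in{}^\ast\H(U)$ for nearly hyperharmonic $v$.
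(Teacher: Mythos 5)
Your overall strategy coincides with the paper's: for (i)\,$\Rightarrow$\,(ii) perturb $\hat v$ by a hyperharmonic function that is $+\infty$ on the polar set $P:=\{\hat v<v\}$ minus one point, and for (ii)\,$\Rightarrow$\,(i) use that $v$ is trivially nearly hyperharmonic and that $\{\hat v<v\}$ is semipolar, hence polar by the axiom of polarity. The converse direction is essentially correct; note only that semipolarity of $\{\hat v<v\}$ for $v=\inf\V$ with $\V\subset{}^\ast\H(U)$ is not literally the paper's \emph{definition} of semipolar (which is phrased for $\V\subset\W$, i.e.\ nonnegative hyperharmonic functions on all of $X$) — it requires the localization facts the paper invokes via \cite[Theorem 6.3.2]{Const}. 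That is a citation issue, not a mathematical one.

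The forward direction, however, has a genuine gap at the points of $P$ themselves. For $x_0\in P$ you take $u\in{}^\ast\H^+(U)$ with $u=\infty$ on $P\setminus\{x_0\}$ and $u(x_0)<\infty$, and propose $s:=\hat v+\ve u$ with $\ve$ small, asserting that $s$ is ``finite and close to $w(x_0)=v(x_0)$'' at $x_0$. But $x_0\in P$ means precisely $\hat v(x_0)<v(x_0)$, so $s(x_0)=\hat v(x_0)+\ve u(x_0)$ is, for small $\ve$, strictly \emph{less} than $v(x_0)$: such an $s$ is not a majorant of $v$ at all, and ``letting $a\uparrow v(x_0)$ and taking infima over the family'' cannot repair this, since the infimum in (ii) runs only over genuine majorants. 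The correct normalization — and this is exactly what the paper does — is to choose the perturbing function $v_{x_0}\in{}^\ast\H^+(U)$ with the \emph{prescribed value} $v_{x_0}(x_0)=v(x_0)-\hat v(x_0)+\ve$ while still $v_{x_0}=\infty$ on $P\setminus\{x_0\}$; this is possible for polar sets (e.g.\ by rescaling $u$ and adding a suitable multiple of the strictly positive function $u_0\in\W\cap\C(X)$). Then $s:=\hat v+v_{x_0}$ satisfies $s\ge\hat v=v$ off $P$, $s=\infty$ on $P\setminus\{x_0\}$, and $s(x_0)=v(x_0)+\ve$, so it is a genuine hyperharmonic majorant whose value at $x_0$ tends to $v(x_0)$ as $\ve\downarrow 0$. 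The same construction (with gap $0$) is also needed at points $x\notin P$ with $v(x)<\infty$, since $\hat v$ alone is never a majorant of $v$ unless $P=\emptyset$; there your small-multiple argument does work.
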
 

\begin{proof} 
If (i) holds, we may argue as in the proof of (1)\,$\Rightarrow$\,(2) in \cite[Theorem 2]{alakhrass-hansen}):
Let $x\in U$ be such that $v(x)<\infty$, and let $\ve>0$. There exists $v_x\in {}^\ast\H^+(U)$ such that 
$v_x(x)=v(x)-\hat v(x)+\ve$ and $v_x=\infty$ on the polar set $\{\hat v<v\} \setminus \{x\}$. 
Then $w:=\hat v+v_x\in {}^\ast\H^+(U)$, $w\ge v$ and $w(x)=v(x)+\ve$.

Next suppose that (ii) holds. Then $v$ is obviously nearly hyperharmonic on $U$. Moreover, the set $\{\hat v<v\} $
is semipolar (see \cite[Theorem 6.3.2]{Const}), and hence polar by the axiom of polarity.
\end{proof} 

We shall use the following consequence. 

\begin{lemma}\label{crucial}
Let $U_n$, $n\in\nat$, be relatively compact open sets in $X$ such that $\ov U_n\subset U_{n+1}$ and
$\bigcup_{n\in\nat} U_n=X$. Moreover, let $(v_n)$  be an increasing sequence of locally lower bounded 
numerical functions on $X$   such that, for every $n\in\nat$,
\begin{equation*}
                                 v_n|_{U_n}=\inf \{w\in {}^\ast \H(U_n)\colon w\ge v_n|_{U_n}\},
\end{equation*} 
and let  $v:=\limn v_n$. Then $\hat v=\limn \hat v_n$ and 
\begin{equation}\label{vRv} 
v=\inf\{w\in {}^\ast\H(X)\colon w\ge v\}.
\end{equation} 
\end{lemma}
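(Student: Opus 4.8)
The plan is to reduce everything to Lemma \ref{nearly-hyper} by showing that the limit $v$ is nearly hyperharmonic on $X$ and that $\{\hat v<v\}$ is polar. First I would read off the hypothesis through Lemma \ref{nearly-hyper}: for each $n$ the assumption on $v_n$ is exactly statement (ii) for $v_n$ on $U_n$, so $v_n$ is nearly hyperharmonic on $U_n$ and the set $P_n:=\{\hat v_n<v_n\}\cap U_n$ is polar. Since a countable union of polar sets is polar, $P:=\bigcup_{n\in\nat}P_n$ is polar.

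Next I would introduce the natural candidate for $\hat v$, namely $w:=\limn \hat v_n$, an increasing limit because $(v_n)$ and hence $(\hat v_n)$ increase. The key local observation is that $w\in {}^\ast\H(X)$: fixing a relatively compact open $V$ with $\ov V\subset U_n$ for some $n$, every $\hat v_m$ with $m\ge n$ lies in ${}^\ast\H(U_m)$ and is in particular hyperharmonic on $V$; being an increasing limit of hyperharmonic functions, $w$ is hyperharmonic on $V$, and since hyperharmonicity is local, $w\in {}^\ast\H(X)$. Off $P$ one has $\hat v_n=v_n$ for all large $n$ (if $x\notin P$, then eventually $x\in U_n\setminus P_n$), so $w=v$ on $X\setminus P$, while $w\le v$ everywhere.

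With these facts I would settle both assertions. For near hyperharmonicity of $v$ on $X$: given $V$ relatively compact in $X$ and $x\in V$, choose $n$ with $\ov V\subset U_n$; then $\int^\ast v_m\,d\mu_x^V\le v_m(x)$ for all $m\ge n$, and letting $m\to\infty$ the monotone convergence theorem for outer integrals (the $v_m$ being uniformly bounded below on the compact set $\ov V$) yields $\int^\ast v\,d\mu_x^V\le v(x)$; in particular $\hat v\in {}^\ast\H(X)$. Since $w\le v$ is lower semicontinuous, $w\le\hat v$, and off $P$ we have $\hat v\le v=w$, so $\hat v=w$ on $X\setminus P$. As $\hat v$ and $w$ are both hyperharmonic and agree off the polar set $P$, they coincide everywhere (by the same principle already used in the proof of Proposition \ref{hat-R-R}, cf.\ \cite[VI.5.10]{BH}); this proves $\hat v=\limn\hat v_n$. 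Finally $\{\hat v<v\}=\{w<v\}\subset P$ is polar, so $v$ is nearly hyperharmonic on $X$ with $\{\hat v<v\}$ polar, and Lemma \ref{nearly-hyper}, (i)\,$\Rightarrow$\,(ii), delivers (\ref{vRv}).

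I expect the main obstacle to be the step asserting that two hyperharmonic functions agreeing outside a polar set are identical: topologically this is false, and it genuinely relies on the potential-theoretic rigidity of hyperharmonic functions off polar sets (fine continuity together with the negligibility of polar sets), exactly the ingredient already exploited via \cite[VI.5.10]{BH} in Proposition \ref{hat-R-R}. A secondary point requiring care is the validity of the monotone convergence theorem for the \emph{outer} integrals $\int^\ast\,d\mu_x^V$, for which one first shifts the $v_m$ by a constant to make them nonnegative on $\ov V$.
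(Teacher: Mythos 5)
Your proof is correct and takes essentially the same route as the paper: both arguments reduce the claim to Lemma~\ref{nearly-hyper} applied in each direction, showing that $v$ is nearly hyperharmonic on $X$ and that $\{\hat v<v\}$ is contained in the polar union of the sets $P_n$. The only difference is that where the paper simply cites \cite[p.\,48]{bauer66} for the facts that $v$ is nearly hyperharmonic and $\hat v=\lim_{n\to\infty}\hat v_n$, you derive them directly (using the identity principle for hyperharmonic functions agreeing off a polar set, as in Proposition~\ref{hat-R-R}), which is a sound, self-contained substitute under the standing axiom of polarity.
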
 

\begin{proof} 
For every $n\in\nat$,
$v_n$ is nearly hyperharmonic on $U_n$  and   $P_n:=\{\hat v_n<v_n\}$ is polar,
by Lemma \ref{nearly-hyper}. Therefore $v$ is nearly hyperharmonic on $X$
and   $\hat v=\limn\hat v_n$  (see \cite[p.\,48]{bauer66}). Hence the set 
$P:=\{\hat v<v\}$ is contained in the union of all $P_n$, $n\in\nat$. So $P$ is polar,
and (\ref{vRv}) holds, by Lemma \ref{nearly-hyper}. 
\end{proof} 

For every open set $U$ in $X$, let $\M_c(U)$ denote the set of all measures with compact   support in~$U$. 
For every  $x\in U$, let $\J_x(U)$ denote the set of all \emph{Jensen measures for~$x$ with
  respect to~$U$}, that is,
\begin{equation*}
  \J_x(U):=\{\mu\in \M_c(U)\colon \int v \,d\mu\le v(x)\mbox{ for every }
  v\in \S(U)\}. 
\end{equation*} 

If $h\in\H(U)$, then $\pm h\in  \S(U) $, and hence
\begin{equation*}
\int h\,d\mu=h(x) \quad\mbox{  for all   $x\in U$ and $\mu\in \J_x(U)$}.
\end{equation*} 
Since every function in  $ {}^\ast\H(U)$  is an increasing limit
of functions in $\S(U)\cap\C(U)$ (see \cite[Corollary 2.3.1]{Const}), a measure
$\mu\in \M_c(U)$ is a Jensen measure for $x$ with respect to $U$
provided $\int u\,d\mu\le u(x)$ for every $u\in \S(U)\cap \C(U)$, and then
$\int w\,d\mu\le w(x)$ for every $w\in{}^\ast \H(U)$.

Of course, $\J_x(U)$ is a convex set containing the Dirac measure $\ve_x$ at $x$ 
and the harmonic measures $\mu_x^V$,
$V$ relatively compact   open in $U$ and $x\in V$  (see \cite{HN-jensen} for a~detailed discussion).

Let $\vp\in \tilde \B(X)$ be locally lower  bounded.  
If $x\in X$, $\mu\in \J_x(X)$, 
then $\mu^\ast(P)=0$ for every polar set $P\subset X\setminus \{x\}$ (if $u\in\W$ such that $u=\infty$ 
on $P\setminus \{x\}$ 
and $u(x)<\infty$, then $\infty\cdot \mu^\ast(P)\le \int u\,d\mu\le u(x)<\infty$). 
Hence we may define a~function $J_\vp$ on $X$ by 
\begin{equation}\label{def-u_}
   J_\vp(x):=\sup\{\int \vp\,d\mu\colon \mu \in \J_x(X)\}, \qquad x\in
   X.
\end{equation} 
Trivially,
\begin{equation}\label{vp-trivial}
                 J_\vp \le \inf\{w\in {}^\ast \H(X)\colon w\ge \vp\}=R_\vp.
\end{equation} 
Let us begin by proving the reverse inequality for $\vp\ge 0$ 
 (see Theorem \ref{u-loc-theorem} for the general case).
A first step is the following.

\begin{proposition}\label{R-psi}
  Let $\psi\ge 0$ be a  $\mathcal P$-bounded upper 
 semicontinuous function on~$X$. Then $  J_\psi=R_\psi$. 
In particular, $J_\psi$ is upper semicontinuous.
\end{proposition}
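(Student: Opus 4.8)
The plan is to combine the trivial inequality (\ref{vp-trivial}), $J_\psi\le R_\psi$, with a matching lower bound built from reduced (balayage) measures. Since $\psi$ is $\mathcal P$-bounded and upper semicontinuous, $R_\psi$ is upper semicontinuous by the continuity properties recalled in Section~2; hence once the identity $J_\psi=R_\psi$ is established, the ``in particular'' assertion is immediate. By Proposition~\ref{hat-R-R} (applicable because the axiom of polarity forces $\{\hat R_\psi<R_\psi\}$ to be polar) we have $R_\psi=\psi\vee\hat R_\psi$. As $\varepsilon_x\in\J_x(X)$ gives $J_\psi(x)\ge\int\psi\,d\varepsilon_x=\psi(x)$, it then suffices to prove $J_\psi(x)\ge\hat R_\psi(x)$ for every $x\in X$.

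For this I would first record the one external fact, that for every compact $K\subseteq X$ the reduced measure $\varepsilon_x^K$ lies in $\J_x(X)$: it is carried by $K$, and so has compact support, and it satisfies $\int v\,d\varepsilon_x^K\le v(x)$ for all $v\in\S(X)$; for $v\in\W$ this is simply $R_v^K(x)=R_{v1_K}(x)\le R_v(x)=v(x)$, and the general (sign-changing) case is the Jensen property of swept-out measures (cf.\ \cite{HN-jensen}). Next I would invoke Theorem~\ref{usemi}: applied to $\psi\in\tilde\B^+(X)$, its proof exhibits $\hat R_\psi$ as the pointwise supremum of the functions $t\hat R_{u_0}^K$, taken over $t\in\rat^+$ and compact $K\subseteq\{\psi>tu_0\}$. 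In particular, given $x$ and $a<\hat R_\psi(x)$, some such block satisfies $t\hat R_{u_0}^K(x)>a$.

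With these in hand the estimate is short. Fix such $t$ and $K$. Since $\varepsilon_x^K$ is carried by $K\subseteq\{\psi>tu_0\}$, we have $\psi\ge tu_0$ $\varepsilon_x^K$-almost everywhere, whence
\begin{equation*}
\int\psi\,d\varepsilon_x^K\ \ge\ t\int u_0\,d\varepsilon_x^K\ =\ tR_{u_0}^K(x)\ \ge\ t\hat R_{u_0}^K(x)\ >\ a.
\end{equation*}
As $\varepsilon_x^K\in\J_x(X)$, this gives $J_\psi(x)>a$; letting $a\uparrow\hat R_\psi(x)$ yields $J_\psi(x)\ge\hat R_\psi(x)$, and therefore $J_\psi(x)\ge\psi(x)\vee\hat R_\psi(x)=R_\psi(x)$. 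Combined with (\ref{vp-trivial}) this proves $J_\psi=R_\psi$.

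The step I expect to be the main obstacle is the imported fact $\varepsilon_x^K\in\J_x(X)$, namely verifying $\int v\,d\varepsilon_x^K\le v(x)$ for continuous superharmonic $v$ of arbitrary sign rather than merely for $v\in\W$: the nonnegative case is immediate from the definition of the reduced measure, while the general case is where one must appeal to balayage theory and the results of \cite{HN-jensen}. A secondary point needing care is the assertion that the supremum of the blocks $t\hat R_{u_0}^K$ recovers $\hat R_\psi$ at \emph{every} point, including points of the exceptional polar set; I would therefore read this off the function identity furnished by the proof of Theorem~\ref{usemi}, rather than from a statement holding only off a polar set.
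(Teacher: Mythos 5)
There is a genuine gap, and it is exactly at the step you flag as the ``main obstacle'': the claim that $\ve_x^K\in\J_x(X)$ for every compact $K$ is false in general. A Jensen measure $\mu$ for $x$ must satisfy $\int v\,d\mu\le v(x)$ for \emph{all} $v\in\S(X)$, in particular for $v=-h$ with $h\in\H(X)$, which forces $\int h\,d\mu=h(x)$. Already in the classical case of a Greenian domain $\Omega$, where $1\in\H(\Omega)$, every Jensen measure is therefore a probability measure, whereas $\ve_x^K(\Omega)=R_1^K(x)<1$ as soon as $x\notin K$ (take $\Omega$ the unit ball, $K$ a closed concentric smaller ball, $x$ outside $K$). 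Concretely, for $v=-h$ with $h\in\H^+(X)$ one gets $\int v\,d\ve_x^K=-R_h^K(x)\ge -h(x)=v(x)$, generally with strict inequality: the Jensen inequality goes the wrong way precisely for the sign-changing superharmonic functions. Sweeping onto a compact set loses mass (probabilistically, the process may never hit $K$); only sweeping onto complements of relatively compact open sets --- the harmonic measures $\mu_x^V$ --- and their iterates stay inside $\J_x(X)$, which is the point of \cite{HN-jensen}. Consequently your lower bound $\int\psi\,d\ve_x^K>a$ does not yield $J_\psi(x)>a$, and the inequality $J_\psi\ge\hat R_\psi$ is not established. (Your secondary worry, that $\sup t\hat R_{u_0}^K=\hat R_\psi$ holds everywhere and not just off a polar set, is in fact fine: the proof of Theorem~\ref{usemi} shows the supremum belongs to $\W$ and dominates $R_{\psi 1_{X\setminus P}}=\hat R_\psi$, so the identity is exact.)

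The paper's proof produces the required Jensen measures not by balayage but by Hahn--Banach duality. On an exhaustion $(U_n)$ it sets $v_n(x)=\inf\{s(x)\colon s\in\S(X)\cap\C(X),\ s\ge\psi\mbox{ on }\ov U_n\}$ for $x\in U_n$, approximates $\psi$ from above by continuous $\mathcal P$-bounded functions $\vp_m$, obtains measures $\nu_m\in\J_x(X)$ carried by $\ov U_n$ realizing the corresponding infimum for $\vp_m$, and passes to a weak limit $\nu\in\J_x(X)$ with $\int\psi\,d\nu\ge v_n(x)$; Lemma~\ref{crucial} then identifies $\lim_n v_n$ with $R_\psi$. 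To salvage your approach you would have to replace $\ve_x^K$ by measures of this type, which essentially reproduces the paper's argument.
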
 

\begin{proof} 
Let us fix  an exhaustion of $X$ by relatively compact open
sets~$U_n$, $n\in\nat$, such that  $\ov U_n\subset U_{n+1}$. For $n\in\nat$, we define a function $v_n\ge \psi$ 
on $X$ by 
\begin{equation*} 
v_n(x): = \inf \{s(x)\colon s\in \S(X)\cap \C(X), \, s\ge \psi \mbox{  on }\ov U_n\}, 
\qquad x\in U_n,
\end{equation*} 
and $v_n:=\psi$ on $X\setminus   U_n$. 
Of course, $v_n|_{U_n}=\inf\{w\in {}^\ast\H(U_n)\colon w\ge v_n|_{U_n}\}$, $n\in\nat$,  and the sequence $(v_n)$
is increasing. By Lemma \ref{crucial}, 
$                          v:=\limn  v_n
$
satisfies $v=R_v$. Since $v\ge \psi$, we see that $v\ge R_\psi$.

Let $(\vp_m)$ be a  sequence of continuous $\mathcal P$-bounded functions 
which is decreasing to $\psi$. 
Let us fix $x\in X$ and consider $n\in\nat$
with $x\in   U_n$. 
By the theorem of Hahn-Banach, there are measures $\nu_m\in \J_x(X)$, $m\in \nat$,  which are supported by~$\ov U_n$
and satisfy 
\begin{equation*} 
                                                \int \vp_m\,d\nu_m=\inf \{s(x)\colon s\in \S(X)\cap \C(X), \, s\ge \vp_m\mbox{  on }\ov U_n\}
\end{equation*} 
(see, for example, \cite[I.2.3]{BH}) so that obviously $ \int \vp_m\,d\nu_m\ge v_n(x)$. 
Having the inequalities  $\int u_0\,d\nu_m\le u_0(x)$ we know  that $\nu_m(\ov U_n)\le u_0(x)/\inf u_0(\ov U_n)$ for all $m\in\nat$.

 Passing to a subsequence we hence may  assume without loss
of generality that the sequence $(\nu_m)$ converges weakly to a measure $\nu$ on $\ov U_n$ (that is,
$\lim_{m\to\infty} \nu_m(f)=\nu(f)$ for every $f\in \C(\ov U_n)$). Then, of course, $\nu\in\J_x(X)$ and, for every $k\in\nat$,
\begin{equation*}
v_n(x) \le  \liminf_{m\to\infty} \int \vp_m\,d\nu_m
\le \lim_{m\to\infty} \int \vp_k\,d\nu_m=\int\vp_k\,d\nu.
\end{equation*} 
Letting $k\to \infty$, we see that $v_n(x)\le \int \psi\,d\nu\le J_\psi(x)$.  
We finally let $n\to\infty$ and, using (\ref{vp-trivial}) and $R_\psi\le v$, obtain that 
$v(x)=J_\psi(x)=R_\psi(x)$. 
\end{proof}

Having Theorem \ref{usemi}, an easy consequence is the following.

\begin{corollary}\label{vp-positive} 
Let $\vp\in\tilde \B(X)$ and  $\vp+h\ge 0$ for some $h\in\H(X)$. 
Then 
\begin{equation*}
J_\vp= R_\vp=\vp\vee \hat R_\vp.
\end{equation*}  
\end{corollary}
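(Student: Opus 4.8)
The plan is to reduce Corollary \ref{vp-positive} to the case $\vp \ge 0$, for which Theorem \ref{usemi} and Proposition \ref{R-psi} already do the heavy lifting. First I would observe that because $J_\vp \le R_\vp$ holds trivially by (\ref{vp-trivial}), everything comes down to establishing the reverse inequality $J_\vp \ge R_\vp$ together with the identity $R_\vp = \vp \vee \hat R_\vp$; the latter is exactly the first part of (\ref{mertens-id}) in Proposition \ref{hat-R-R}, valid since $\vp \in \tilde \B(X)$ guarantees (via the axiom of polarity, as in Lemma \ref{nearly-hyper}) that $\hat R_\vp = R_\vp$ off a polar set. So the genuine content is the potential-theoretic inequality $R_\vp \le J_\vp$.

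To handle the sign, I would use the hypothesis $\vp + h \ge 0$ with $h \in \H(X)$ to pass to the nonnegative function $\vp_0 := \vp + h \ge 0$. Since $h$ is harmonic, $\int h \, d\mu = h(x)$ for every $\mu \in \J_x(X)$, which gives the exact translation identity $J_{\vp_0} = J_\vp + h$; similarly reduction transforms as $R_{\vp + h} = R_\vp + h$ because adding a harmonic (hence both hyper- and hypo-harmonic) function shifts the competing majorants in (\ref{def-red}) rigidly. Thus it suffices to prove $J_{\vp_0} \ge R_{\vp_0}$ for the nonnegative $\vp_0 \in \tilde\B^+(X)$, after which subtracting $h$ recovers the claim for $\vp$.

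For the nonnegative case I would invoke Theorem \ref{usemi}: there is an increasing sequence $(\psi_n)$ in $\Psi$ of bounded upper semicontinuous functions with compact support such that
\begin{equation*}
R_{\vp_0} = \vp_0 \vee \sup\nolimits_{n\in\nat} R_{\psi_n} = \sup\{R_\psi \colon \psi \in \Psi\}.
\end{equation*}
Each $\psi_n$ is bounded with compact support, hence $\mathcal P$-bounded and upper semicontinuous, so Proposition \ref{R-psi} yields $R_{\psi_n} = J_{\psi_n}$. Since $\psi_n \le \vp_0$ pointwise and $J$ is monotone, $J_{\psi_n} \le J_{\vp_0}$, giving $R_{\psi_n} \le J_{\vp_0}$ for all $n$; taking the supremum over $n$ produces $\sup_n R_{\psi_n} \le J_{\vp_0}$. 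It remains to absorb the $\vp_0$ term: because $\ve_x \in \J_x(X)$, we always have $J_{\vp_0}(x) \ge \int \vp_0 \, d\ve_x = \vp_0(x)$, so $J_{\vp_0} \ge \vp_0$ as well. Combining these two bounds with the Theorem \ref{usemi} formula gives $J_{\vp_0} \ge \vp_0 \vee \sup_n R_{\psi_n} = R_{\vp_0}$, as desired.

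The step I expect to require the most care is the reduction by the harmonic function $h$: I must confirm that both $R$ and $J$ are exactly additive under adding $h \in \H(X)$, and in particular that $\vp \in \tilde\B(X)$ if and only if $\vp_0 = \vp + h \in \tilde\B^+(X)$ (this is immediate since $h$ is finite and continuous off a negligible set, so the two Borel sandwiching functions simply shift by $h$). The identity $J_{\vp + h} = J_\vp + h$ relies on the two-sided bound $\pm h \in \S(X)$ noted in the text, which forces $\int h \, d\mu = h(x)$; I would want to make sure the integrals $\int \vp \, d\mu$ are well-defined (not $\infty - \infty$), which is why the local lower boundedness built into $\vp_0 \ge 0$ and the earlier observation that $\mu^\ast$ charges no polar set off $\{x\}$ are both needed. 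Once these bookkeeping points are secured, the argument closes cleanly by monotonicity and Theorem \ref{usemi}.
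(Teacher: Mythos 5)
Your proposal is correct and follows essentially the same route as the paper: reduce to the nonnegative case by translating with the harmonic function $h$ (using $\int h\,d\mu=h(x)$ for $\mu\in\J_x(X)$ and the rigid shift of majorants in the reduction), then combine Theorem \ref{usemi} with Proposition \ref{R-psi} for the approximating $\psi_n$, the monotonicity of $J$, and $\ve_x\in\J_x(X)$ to get $J_{\vp_0}\ge\vp_0\vee\sup_n R_{\psi_n}=R_{\vp_0}$, with $R_\vp=\vp\vee\hat R_\vp$ coming from Proposition \ref{hat-R-R}. The only cosmetic difference is the order of the two steps, so no further comparison is needed.
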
 

\begin{proof} 
(a) Let us suppose first that $\vp\ge 0$.
By (\ref{vp-trivial}), $J_\vp\le R_\vp$.
On the other hand, by Theorem \ref{usemi},
there exist bounded upper semicontinuous functions $\psi_n$ with compact support  
 which  satisfy $0\le \psi_n\le \psi_{n+1}\le \vp$, $n\in\nat$, and 
\begin{equation*} 
 R_\vp=                          \vp\vee   \sup\nolimits_{n\in\nat} R_{\psi_n}.
\end{equation*} 
Since $\ve_x\in \J_x(X)$ for every $x\in X$, we know that $\vp\le
J_\vp$.  By Proposition~\ref{R-psi},
$R_{\psi_n}=J_{\psi_n}\le J_\vp$ for all $n\in\nat$. Thus also $R_\vp\le J_\vp$.
By Theorem \ref{hat-R-R}, $R_\vp=\vp\vee \hat R_\vp$.

(b) In the general case $\vp+h\ge 0$ it suffices to observe that $\vp+h\in \tilde \B(X)$,  
 hence $J_{\vp+h}=R_{\vp+h}$, by (a), and that obviously     
$    J_\vp=J_{\vp+h}-h$ and $R_\vp=R_{\vp+h}-h$. 
   \end{proof} 

To obtain the same result  for 
functions $\vp\in \B(X)$ which are only supposed to be locally lower bounded, 
we shall apply  Corollary \ref{vp-positive} to relatively compact open subsets~$U$ of~$X$   
assuming that on these sets $U$ there exist   strictly positive harmonic functions.  
This is a rather weak assumption; it  is equivalent to $R_{u_0}^{X\setminus U}>0$.
In this process, we have to work with the  subset
$\J_x'(X)$ of $\J_x(X)$,  $x\in X$, defined by
\begin{equation*}
          \J_x'(X):=\{\mu\in \M_c(X)\colon \mu \in
          \J_x(U) \mbox{ for some relatively
            compact open $U$ in }X\},
\end{equation*} 
and to consider also functions  $J_\vp'$ defined by
\begin{equation*}
      J_\vp'(x):=\sup\{\int \vp\, d\mu\colon \mu\in \J_x'(X)\}.
\end{equation*} 
For the sake of completeness, we   recall from \cite{HN-jensen} that
fairly weak assumptions on~$(X,\W)$ imply that  
$\J_x'(X)=\J_x(X)$ for every $x\in X$ (see Remark \ref{J-prime-J},2).     

Here is the main result in this Section.

\begin{theorem}\label{u-loc-theorem}
Let $\vp\in \tilde \B(X)$ be locally lower bounded. Then 
\begin{equation*}
J_\vp=J_\vp'=R_\vp=\vp\vee \hat R_\vp.
\end{equation*} 
In particular, $J_\vp\in \tilde \B(X)$. Moreover, $J_\vp\in \B(X)$ if $\vp\in\B(X)$.   
\end{theorem}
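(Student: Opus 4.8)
The plan is to reduce the general locally lower bounded case to the already-settled positive case (Corollary~\ref{vp-positive}) by a local truncation and exhaustion argument, running through the auxiliary family $\J_x'(X)$ so that we may legitimately pass to relatively compact open subsets where strictly positive harmonic functions exist. First I would fix an exhaustion $(U_n)$ of $X$ by relatively compact open sets with $\ov U_n\subset U_{n+1}$, and on each $U_n$ work with the restriction $\vp|_{U_n}$. Since $\vp$ is locally lower bounded, on $\ov U_n$ we have $\vp\ge -c_n$ for some constant $c_n$; adding a suitable strictly positive harmonic function $h_n$ on $U_n$ (available by the standing assumption $R_{u_0}^{X\sms U_n}>0$) we arrange $\vp+h_n\ge 0$ on $U_n$. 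This puts us in the situation of Corollary~\ref{vp-positive} applied to the harmonic space $(U_n,{}^\ast\H^+(U_n))$, giving on $U_n$ the identity $J^{U_n}_{\vp}=R^{U_n}_\vp=\vp\vee\hat R^{U_n}_\vp$, where the superscript indicates reduction and Jensen measures taken with respect to $U_n$.

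The central step is then to glue these local identities into the global one. I would define $v_n$ on $X$ by reducing $\vp$ within $U_n$ and leaving it equal to $\vp$ outside, exactly as in the proof of Proposition~\ref{R-psi}, check that $(v_n)$ is increasing and that each $v_n|_{U_n}$ equals the infimum of its hyperharmonic majorants on $U_n$, and invoke Lemma~\ref{crucial} to conclude that $v:=\limn v_n$ satisfies $v=R_v$ and $\hat v=\limn\hat v_n$. Because $v\ge\vp$ and $v=R_v\in{}^\ast\H(X)$, monotonicity forces $v\ge R_\vp$, while the reverse inequality $v\le R_\vp$ is immediate since each $v_n\le R_\vp$; hence $v=R_\vp$. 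The inequality $J_\vp\le R_\vp$ is already recorded in~(\ref{vp-trivial}), so the whole matter reduces to producing, for each $x$ and each $n$ with $x\in U_n$, a measure in $\J'_x(X)$ whose $\vp$-integral approximates $v_n(x)=R^{U_n}_\vp(x)$; this is precisely what Corollary~\ref{vp-positive} on $U_n$ delivers (after subtracting the harmonic correction $h_n$, which integrates exactly against Jensen measures for $U_n$), and these measures have compact support in $U_n$, so they lie in $\J'_x(X)$. Letting $n\to\infty$ yields $J'_\vp(x)\ge v(x)=R_\vp(x)$, and since $\J'_x(X)\subset\J_x(X)$ we get $J_\vp\ge J'_\vp\ge R_\vp$, closing the chain $J_\vp=J'_\vp=R_\vp$.

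I expect the main obstacle to be the bookkeeping around the harmonic correction term and the interface between the two notions of reduction. One must verify that $J^{U_n}_{\vp+h_n}-h_n=J^{U_n}_\vp$ as functions on $U_n$, using that $\int h_n\,d\mu=h_n(x)$ for $\mu\in\J_x(U_n)$ since $\pm h_n\in\S(U_n)$, and simultaneously that $R^{U_n}_{\vp+h_n}-h_n=R^{U_n}_\vp$; this parallels step (b) of Corollary~\ref{vp-positive} but now on $U_n$ rather than $X$. One must also be careful that polarity with respect to $(U_n,{}^\ast\H^+(U_n))$ coincides with polarity with respect to $(X,\W)$, which the excerpt explicitly grants, so that the hypotheses of Lemma~\ref{crucial} are genuinely met and the exceptional sets $P_n$ remain polar in $X$. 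Once $J_\vp=R_\vp=\vp\vee\hat R_\vp$ is established, the final measurability assertions are immediate: the right-hand side lies in $\tilde\B(X)$ by Proposition~\ref{hat-R-R}, and it lies in $\B(X)$ whenever $\vp\in\B(X)$ by the last sentence of that same proposition, since $\hat R_\vp\in\W$ is always Borel measurable.
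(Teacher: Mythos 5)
Your proposal follows essentially the same route as the paper: exhaust $X$ by relatively compact open sets $U_n$, shift $\vp$ by a strictly positive harmonic function to apply Corollary~\ref{vp-positive} on each $U_n$, extend the local reductions $v_n$ by $\vp$ outside $U_n$, glue with Lemma~\ref{crucial}, and close the chain through $J_\vp'$. The only point to tighten is that a strictly positive harmonic function on $U_n$ itself need not be bounded away from zero near $\partial U_n$, so (as the paper does) one should take $h_{n+1}\in\H(U_{n+1})$ strictly positive and use $h_n:=a_nh_{n+1}|_{U_n}$, which is bounded below on the compact set $\ov U_n$, to guarantee $\vp+h_n>0$ on $U_n$.
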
 

\begin{proof} 
Since $\J_x'(X)\subset \J_x(X)$, $x\in X$, and (\ref{vp-trivial}) holds,
we have the inequalities  
\begin{equation*} 
     R_\vp\ge J_{\vp}\ge J_\vp'.
\end{equation*} 

To prove that $ J_\vp'\ge R_\vp$
let us choose again relatively compact open sets $U_n$ exhausting $X$
such that $\ov U_n\subset U_{n+1}$ for every $n\in \nat$.
For the moment,  let us fix $n\in\nat$.  By assumption, there is
a strictly positive function $h_{n+1}\in \H(U_{n+1})$, and  
  there exists $a_n>0$ such that the function $h_n:=a_n h_{n+1}|_{U_n}\in \H^+(U_n)$
satisfies $\vp+h_n>0$ on~$U_n$.
 By Corollary  \ref{vp-positive} (applied to $U_n$ instead of $X$),  
\begin{equation}\label{vnvp}
v_n:= \inf\{  w\in {}^\ast\H(U_n) \colon w\ge \vp \mbox{ on }U_n\} = (\vp|_{U_n})\vee \hat v_n
 \end{equation} 
and, for every $x\in U_n$,
\begin{equation}\label{vnUn} 
v_n(x)= \sup\{\int \vp\,d\mu\colon \mu\in \J_x(U_n)\}. 
\end{equation} 
Extending the functions $v_n$ to functions on $X$ by $v_n(x):=\vp(x)$, $x\in X\setminus U_n$,
(\ref{vnUn}) implies that the sequence $(v_n)$  is increasing to $v:=J_\vp'$. By Lemma \ref{crucial},
we   conclude that $v=R_v$ and $\hat v=\limn \hat v_n$.   Since   $v\ge \vp$, we obtain that $J_\vp'=v\ge R_\vp$.

Thus $J_\vp=J_\vp'=R_\vp$, and we finally see that $R_\vp=\vp\vee \hat R_\vp$, by (\ref{vnvp}).
\end{proof}

\begin{corollary}\label{char-inf} For every locally lower bounded numerical function $u$ on $X$
the following three statements are equivalent:
\begin{itemize}
\item[\rm (i)] $u$ is the infimum of its hyperharmonic majorants.
\item[\rm (ii)] $u\in \tilde \B(X)$ and $\int u\,d\mu\le u(x)$ for all $x\in X$ and $\mu\in \J_x(X)$.
\item[\rm (iii)] $u\in \tilde \B(X)$ and $\int u\,d\mu\le u(x)$ for all $x\in X$ and $\mu\in \J_x'(X)$.
\end{itemize} 
\end{corollary}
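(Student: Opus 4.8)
The plan is to establish the cycle (i) $\Rightarrow$ (ii) $\Rightarrow$ (iii) $\Rightarrow$ (i), reading each statement through the identities of Theorem \ref{u-loc-theorem}. Throughout, observe that (i) is nothing but the assertion $u=R_u$, that the hypothesis of Theorem \ref{u-loc-theorem} (namely $u\in\tilde\B(X)$ together with local lower boundedness) holds automatically once $u\in\tilde\B(X)$ is known, since local lower boundedness is the standing assumption of the corollary, and that the conditions in (ii) and (iii) read $J_u\le u$ and $J_u'\le u$ respectively, because $J_u(x)=\sup\{\int u\,d\mu\colon \mu\in\J_x(X)\}$ and likewise for $J_u'$.

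For (i) $\Rightarrow$ (ii), I would first secure $u\in\tilde\B(X)$. By Lemma \ref{nearly-hyper}, $u=R_u$ forces $\hat u\in{}^\ast\H(X)$, in particular lower semicontinuous and hence Borel, together with $\hat u\le u$ and $\hat u=u$ off the polar set $\{\hat u<u\}$. Enlarging $\{\hat u<u\}$ to a Borel (indeed $G_\delta$) polar set $P$ — every polar set is contained in a Borel polar set, e.g. the infinity set $\{p=\infty\}$ of a suitable potential $p$ — the functions $\vp_1:=\hat u$ and $\vp_2:=\hat u\,1_{X\setminus P}+(+\infty)\,1_P$ are Borel, satisfy $\vp_1\le u\le\vp_2$, and agree on $X\setminus P$; thus $u\in\tilde\B(X)$. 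Now Theorem \ref{u-loc-theorem} yields $J_u=R_u$, and $R_u=u$ by (i), so $J_u=u$ and in particular $\int u\,d\mu\le u(x)$ for every $\mu\in\J_x(X)$, which is (ii). The implication (ii) $\Rightarrow$ (iii) is then immediate: the measurability clause is identical, and since $\J_x'(X)\subset\J_x(X)$ the inequality in (ii) specializes to that in (iii).

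For (iii) $\Rightarrow$ (i), the hypothesis already grants $u\in\tilde\B(X)$, so Theorem \ref{u-loc-theorem} applies and gives $J_u'=R_u$; the inequality in (iii) is exactly $J_u'\le u$, whence $R_u\le u$. Since conversely every hyperharmonic majorant of $u$ dominates $u$, one has $R_u\ge u$, and therefore $u=R_u$, i.e. (i). The only step requiring genuine care — and thus the main obstacle — is the verification that $u\in\tilde\B(X)$ in (i) $\Rightarrow$ (ii), which hinges on regularizing $u$ to $\hat u$ and sandwiching it between two Borel functions differing only on a Borel polar set; everything else is a direct reading of Theorem \ref{u-loc-theorem} and of the trivial bound $R_u\ge u$.
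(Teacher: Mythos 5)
Your proof is correct and follows essentially the same route as the paper, which likewise reduces all three equivalences to Theorem \ref{u-loc-theorem} (the paper's one-line proof just notes that (iii) forces $J_u'=u$). The only difference is that you explicitly verify $u\in\tilde\B(X)$ under (i) via Lemma \ref{nearly-hyper} and a Borel polar superset of $\{\hat u<u\}$, a detail the paper leaves implicit; this is a welcome clarification rather than a new method.
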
 

\begin{proof} Having Theorem \ref{u-loc-theorem} it suffices to observe that (iii) implies  $J'_u=u$.
\end{proof}

\begin{remarks}\label{J-prime-J}
{\rm
1.
 An equivalence as in Corollary \ref{char-inf} is contained in (\cite[Theorem~2]{alakhrass-hansen} 
 under the stronger assumption of having a~Brelot space satisfying the  axiom of domination.

2. The detailed description of Jensen measures in \cite{HN-jensen} led to 
various simple properties implying that (without assuming the axiom of polarity)
\begin{equation}\label{JJ-id} 
 \J_x'(X)=\J_x(X) \qquad \mbox{ for every }x\in X. 
 \end{equation} 

 For example, (\ref{JJ-id}) holds
if $(X,\W)$ has the following approximation property (AP):
For every compact $K$ in $X$, there exists a relatively compact open neighborhood $U$
 of $K$ such that, for all $u\in \S(U)\cap \C(U)$ and $\ve>0$, there exists a function   
$v\in \S(X)\cap \C(X)$ satisfying $|u-v|<\ve$ on $K$.

If~$(X,\W)$ is elliptic, that is, if every     
superharmonic function $s\ge 0$, $s\ne 0$, 
 on a~domain $U$ in $X$
 is strictly positive, (AP) follows from  \cite[Theorem~6.1 and Remark~6.2.1]{BH-simplicial-II}
(cf.\ also \hbox{\cite[Theorem 6.9]{gardiner-app}} for
the classical case and \cite[Theorem~1]{{gardiner-gowri}} 
for the case of  a~Brelot space satisfying the axiom of domination).

An approach to (\ref{JJ-id}), which is much less involved
and, by \cite[Proposition 3.2]{HN-jensen},  covers the classical case as well, 
assumes that $(X,\W)$ is $h_0$-transient for some strictly positive $h_0\in \H(X)$, that is,
for every compact $K$ in $X$, the (closed) set $\{R_{h_0}^K=h_0\}$ is compact (\cite[Theorem 3.3]{HN-jensen},
see also \cite[Corollary 4.4]{HN-jensen} for several characterizations of $1$-transient bounded open sets in the classical case). 
}
\end{remarks}

{\small \noindent 
Wolfhard Hansen,
Fakult\"at f\"ur Mathematik,
Universit\"at Bielefeld,
33501 Bielefeld, Germany, e-mail:
 hansen$@$math.uni-bielefeld.de}\\
{\small \noindent Ivan Netuka,
Charles University,
Faculty of Mathematics and Physics,
Mathematical Institute,
 Sokolovsk\'a 83,
 186 75 Praha 8, Czech Republic, email:
netuka@karlin.mff.cuni.cz}

\end{document}